\newtheorem{theorem}{Theorem} %[section]
\newtheorem{lemma}[theorem]{Lemma}
\newtheorem{proposition}[theorem]{Proposition}
\newtheorem{corollary}[theorem]{Corollary}
\theoremstyle{definition}
\newtheorem{remark}[theorem]{Remark}
\renewcommand{\leq}{\leqslant}
\renewcommand{\geq}{\geqslant}
\newcommand{\eps}{\epsilon}
\def\lstar{\op{lg*}}
\newcommand\op[1]{\operatorname{#1}}
\def\P{\mathbf{P}}
\def\calM{\mathcal{M}}
\def\ord{\op{ord}}
\def\type{\op{type}}
\def\Z{Z}
\begin{document}
\title{Permutations With Equal Orders}

\author[Acan]{Huseyin Acan}
            \address{Department of Mathematics, Drexel University, Philadelphia, Pa, 19104}
          \email{ha627@drexel.edu}                 
\author[Burnette]{Charles Burnette\textsuperscript{1}}
           \address{Mathematics Department, Xavier University of Louisiana}
           \thanks{\textsuperscript{1}Some of this work was done while Charles Burnette was 
           a postdoctoral visitor at  Saint Louis University and the Institute of Statistical Science, Academia         Sinica}
            \email{cburnet2@xula.edu}           
\author[Eberhard]{Sean Eberhard}
          \address{Department of Pure Mathematics and Statistics,University of Cambridge,London}
          \email{eberhard.math@gmail.com}           
\author[Schmutz]{Eric Schmutz}
            \address{Department of Mathematics, Drexel University, Philadelphia, Pa, 19104}
             \email{eschmutz@math.drexel.edu}
\author[Thomas]{James Thomas\textsuperscript{2}}
           \thanks{\textsuperscript{2}Portions of this work  appear in James  Thomas' Ph.D. thesis at Drexel University. }
            \address{Department of Mathematics, Drexel University, Philadelphia, Pa, 19104}
            \email{jjt94@dragons.drexel.edu}
            
\begin{abstract}
Let $ \P(\ord \pi = \ord \pi')$ be the probability that two independent, uniformly random  permutations of $[n]$ have the same order.
Answering a question of Thibault Godin, we prove that $\P(\ord \pi = \ord \pi') = n^{-2+o(1)}$ and that $\P(\ord \pi = \ord \pi') \geq \frac12 n^{-2} \lstar n$ for infinitely many $n$.
(Here $\lstar n$ is the height of the tallest tower of twos that is less than or equal to $n$.)

\end{abstract}           
\maketitle
\section{Introduction}
\subsection{The problem}
Let $\pi$ be a random permutation of $[n]$.
Write $\ord \pi$ for the order of a permutation $\pi$, i.e., the least common multiple of its cycle lengths.
The distribution of $\ord \pi$ is an object of basic interest in probabilistic group theory.
For example, a beautiful theorem of Erd\H{o}s and Tur\'an~\cite{MR215908}
 %~\cite{ET3} 
asserts that $\log \ord \pi$ is asymptotically normal with mean $\log^2 n / 2$ and variance $\log^3 n / 3$.
Many more features of the distribution of $\ord \pi$ are visible through the lens of the theory of logarithmic combinatorial structures: see for example the 
book of Arratia, Barbour, and Tavar\'e~\cite{MR2032426}.
%~\cite{ABT-book}.
For example, the largest cycles of $\pi$, which  determine the magnitude of $\ord \pi$ and its divisibility by large primes, follow a Poisson--Dirichlet law.

A more subtle feature of the distribution of $\ord \pi$ is its collision entropy. Recall that the \emph{collision entropy} or \emph{R\'enyi 2-entropy} of a random variable $X$ is defined by
\[
  H_2(X) = - \log \P(X = X'),
\]
where $X'$ is an independent copy of $X$.
In other words, for $X = \ord \pi$, the problem is to estimate
\[
  e^{-H_2(\ord \pi)} = \P(\ord \pi = \ord \pi').
\]
This problem was highlighted recently by 
Godin~\cite{MR3630945}
%~\cite{godin} 
in connection with automaton groups.
We are grateful to Sergey Dovgal for bringing this problem to our attention.

Let $\type \pi$ denote the cycle type of $\pi$, i.e., the multi-set of its cycle lengths.
Since permutations with the same type have the same order, it is clear that
\[
  \P(\ord \pi = \ord \pi') \geq \P(\type \pi = \type \pi').
\]
Using methods of analytic combinatorics, it was proved by Flajolet, Fusy, Gourdon, Panario, and Pouyanne~\cite[Proposition~4]{MR2274318} %~\cite[Proposition~4]{flajolet-et-al} 
that
\[
  \P(\type \pi = \type \pi')=\frac{c_{0}}{n^{2}}+O\left(\frac{1}{n^{3}}\right), \hspace{2.5em}
  c_0 = \prod_{k \geq 1} I\left(\frac{1}{k^2}\right) \approx 4.26,
\]
where $I(z) = \sum_{n \geq 0} z^n / n!^2$.
Based on this lower bound and computations, Godin conjectured that
\begin{equation}
  \label{godin-conj}
  \lim_{n \to \infty} n^2 \P(\ord \pi = \ord \pi') = K
\end{equation}
for some constant $K$ with $c_0 \leq K \leq 12$ 
(see \cite[Conjecture~15]{MR3630945}).%(see \cite[Conjecture~15]{godin}).

It follows from the Erd\H{o}s--Tur\'an limit law for $\log \ord \pi$ that $\P(\ord \pi = \ord \pi') = o(1)$, but establishing any explicit rate of decay is already nontrivial.
A crude bound was established in an earlier version of this paper and in the fifth author's thesis~\cite{Thomas-thesis}. 
Briefly, using estimates for the probability that $\ord \pi$ is coprime to a given integer, one can prove that with high probability there is a prime in the interval $[\log n, 2\log n]$ that divides exactly one of $\ord \pi$ and $\ord \pi'$.
This argument leads to a bound of the form $O(\log \log n / \log n)$,
but does not come close to Godin's conjecture.

We can contrast the effort involved in estimating the collision entropies of $\type \pi$ and $\ord \pi$ even further. Suppose $k = o(n)$ and let $\lambda = \langle1^{\lambda_1}, 2^{\lambda_2}, \ldots\rangle$ be a partition of $k.$ Then
\begin{equation}
\label{largecycle}
\P(\pi\ \text{and}\ \pi'\ \text{have type}\ \langle\lambda, n - k\rangle) = \frac{1}{(n - k)^2}\prod_{j \geq 1} \frac{1}{j^{2\lambda_j}(\lambda_j!)^2} \approx \frac{1}{n^2}\prod_{j \geq 1} \frac{1}{j^{2\lambda_j}(\lambda_j!)^2}.
\end{equation}
Heuristically summing the rightmost approximation over all such $k$ and $\lambda$ yields a substantial partial sum of
\begin{equation}
\label{partialsum}
\frac{1}{n^2}\sum_{k \geq 0} \sum_{\lambda \vdash k} \prod_{j \geq 1} \frac{1}{j^{2\lambda_j}(\lambda_j!)^2} = \frac{c_0}{n^2},
\end{equation}
where $\lambda \vdash k$ has the usual meaning that $\lambda$ is a partition of $k$ (i.e. a multiset of positive integers whose sum is $k$).
This together with the analysis of~\cite{MR2274318} shows that the main contribution to $n^2\P(\type \pi = \type \pi')$ comes from pairs of permutations having a cycle of length $n - o(n).$ Motivated by this, one may ask whether at least $n^2\P(\ord \pi = \ord \pi' \wedge E )$ is bounded, where $E$ is the event that 
 $\pi$ and $\pi^{'}$ each have a cycle of length at least $n - k(n),$ where  $k(n)$ is some slowly growing function. This, however, is not the case.

In this paper we prove two main results.
First, we refute \eqref{godin-conj} by showing that
\[
  \limsup_{n \to \infty} n^2 \P(\ord \pi = \ord \pi') = \infty.
\]
Quantitatively, we show that there is a sequence $n_i \to \infty$ such that if $\pi, \pi'$ are drawn independently from $S_{n_i}$ then
\[
  \P(\ord \pi = \ord \pi') \geq \P(\ord \pi = \ord \pi = n_i - o(n_i)) \geq \frac12 n_i^{-2} \lstar n_i,
\]
where $\lstar n$ is the height of the tallest tower of twos that does not exceed $n$. This precludes any heuristic similar to \eqref{largecycle} and \eqref{partialsum} from succeeding here. On the other hand we show that \eqref{godin-conj} is nearly true, in the sense that
\begin{equation}
  \label{main-upper-bound}
  \P(\ord \pi = \ord \pi') \leq n^{-2+o(1)}.
\end{equation}
It would be interesting to estimate $\P(\ord \pi = \ord \pi')$ more precisely, but this appears to be a complicated question tied to arithmetic considerations about $n$.

For a broader perspective, readers may be interested in the survey of Niemeyer, Praeger, and Seress on the applications of probabilistic and enumerative techniques to the analysis of group-theoretic algorithms \cite{MR3026186}.

\subsection{Analytic Combinatorics}
Analytic combinatorics relates the analytic behaviour of a generating function to the asymptotic behaviour of its coefficients.
While the problem of estimating $\P(\type \pi = \type \pi')$ is well-suited to the methods of analytic combinatorics, the same does not seem to be true of $\P(\ord \pi = \ord \pi')$.
We offer some brief comments about why this may be.

Elementary combinatorial techniques are sufficient for enumerating the ordered pairs of conjugate permutations.
As a result, the numbers $\P(\type \pi = \type \pi')$ are expressible as the coefficients of a well-behaved infinite product generating function closely related to the cycle index of the symmetric group (as explained in ~\cite[Section~4.2]{MR2274318}.
%\cite[Section~4.2]{flajolet-et-al}).
% A necessary condition for $\ord \pi = m$ is that $\ord\pi $ divides $m$.
% % Recently there have been some efforts to estimate $\P(\ord \pi\ \text{ divides }\ m)$   for applications to recognition algorithms for symmetric and alternating groups \cite{ MR2228649,MR2335705}.

In contrast consider $\P(\ord \pi = \ord \pi')$.
For any fixed positive integer $m$, the exponential formula yields
\begin{equation}
\label{divides}
F_{m}(x)
= \sum\limits_{n}\P(\ord \pi ~\text{divides}~ m) x^{n}
= \exp\left(\sum\limits_{d|m}\frac{x^{d}}{d}\right).
\end{equation}
An application of M{\"o}bius inversion to \eqref{divides} thus yields
\begin{equation}
 \label{egf-ordm}
G_{m}(x)
= \sum\limits_{n} \P(\ord \pi = m) x^n
= \sum\limits_{d|m}\mu\left(\frac{m}{d}\right)F_{d}(x).
\end{equation}
Using M{\"o}bius and Lagrange inverson, and the saddle-point method, 
Wilf~\cite{MR854561}
%Wilf~\cite{wilf} 
used \eqref{egf-ordm} to derive an asymptotic formula for $\P(\ord \pi = m)$ for \emph{fixed} $m$,
but as $m$ grows Wilf's formula becomes more complicated
and the asymptotics are less well-understood.
In the special case of $m = n$ there is a theorem of 
Warlimont~\cite{MR498814}
%~\cite{warlimont}
 that
\[
  \P(\ord \pi = n) = 1/n + O(1/n^2),
\]
and this estimate has been extended by
Niemeyer and Praeger~\cite{MR2335705}
%~\cite{NP} 
to various other values of $m$.
A general understanding of $\P(\ord \pi = m)$ is lacking, and indeed complicated for arithmetic reasons.
As such, one cannot simply plug these asymptotic estimates into the sum $\sum_{m}\P(\ord \pi =m)^{2}$ to answer Godin's question.

% It is interesting to compare this with Bender's local limit theorems~\cite{MR375433} which include asymptotic formulae for a variety of doubly indexed sequences, such as the Stirling numbers $S(n,m_{n})$.

There is a rich literature about methods for extracting the coefficients of
multivariate generating functions \cite{MR3088495,MR2916463}.
Certainly we may define a bivariate generating function
$H(x,y)=\sum_{m}G_{m}(x)G_{m}(y)$, and
\begin{equation}
\label{answer}
\P(\ord \pi=\ord \pi^{'})=[[x^{n}y^n]] H\left(x,y\right).
\end{equation}
In some formal sense this is an answer, but we do not see any way to extract an asymptotic
formula from \eqref{answer}.

Analytic combinatorics, by itself, is likely inadequate for attaining a thorough asymptotic analysis of the sequence $\P(\ord\pi = m)$ because
the order of a permutation depends on arithmetic data not easily extracted from the classical generating functions associated with permutations.
Any hope for a purely symbolic calculus that can handle the sequence $\P(\ord \pi = \ord \pi')$ might hinge on techniques that are more in the realm of analytic number theory,
such as a Mellin transform or a Dirichlet series generating function.

Another notable obstruction to a generating-function-based approach is the apparently erratic dependence of $\P(\ord \pi = \ord \pi')$ on $n$,
which may be observed numerically.
If the sequence $\P(\ord \pi = \ord \pi')$ were realized as the coefficients of a generating function, the behaviour of that function near its singularity would have to be similarly complicated.

\subsection{The anatomy of integers}

In sharp relief from the beautiful formalism of analytic combinatorics,
our proof of \eqref{main-upper-bound} is dirty and hands-on,
and more closely connected with the ``anatomy of integers'':
see Granville~\cite{granville} for an explanation of this term,
and Ford~\cite{ford} or the book of 
Hall and Tenenbaum~\cite{MR964687}
%~\cite{HT} 
for a sense of the scope of the theory.
We have mentioned already that $\log \ord \pi$ is asymptotically normal with mean $\log^2 n / 2$ and variance $\log^3 n / 3$, and that the largest cycles of $\pi$ are distributed asymptotically according to the Poisson--Dirichlet law.
By further analyzing the distribution of the cycles of $\pi$ we show that apart from an exceptional event of probability $n^{-1+o(1)}$, including for instance the event that $\pi$ is an $n$-cycle or an $(n-1)$-cycle,
the integer $m = \ord \pi$ will have many large prime divisors,
so many in fact that the collision probability $\P(\ord \pi' = m)$ is negligible.
It follows that the probability that $\ord \pi = \ord \pi'$ is dominated by the event that $\pi$ and $\pi'$ are both exceptional.
\section{Disproof of Godin's Conjecture}

\def\Tow{\operatorname{Tow}}

 The results in this section are based on the third author's  {\tt math}{\it overflow} post~\cite{Godin-16}. Define $\Tow(h)$ to  be a tower of twos of height $h$, i.e., $\Tow(0)=1$,
 and for $h>0$, $\Tow(h)=2^{\Tow(h-1)}.$ Also define
 $\lstar n=\max\lbrace h: \Tow(h)\leq  n\rbrace.$

 \begin{theorem}\label{thm:lower bound} For infinitely many  positive integers $n$,
 \[
 \P(\ord \pi =\ord \pi')
 \geq \frac{\lstar n}{2n^2}.
 \]
\end{theorem}
\begin{proof}
For a positive integer $n$, let $K_n =\{k:1\leq k < n/2\ \text{and}\ k! \text{ divides } n-k\}$. If $\pi$ has a cycle of length $n-k$, with $k \in K_n$, then all other cycles have length at most $k$. Since the lengths of these other cycles are at most $k$, they all divide $k!$, which in turn divides $n-k$ (by the definition of  $K_n $). Therefore $\ord \pi = n-k$. 
The probability that $\pi$ has a cycle of length $n-k$ is exactly $1/(n-k)$. Since $n-k>\frac{n}{2}$,  these events are disjoint, since there cannot be more than one cycle of length greater than $\frac{n}{2}$.  We therefore have
\[
 \P(\ord \pi = \ord \pi')
 \ge \sum_{k \in K_n} \frac{1}{(n-k)^2}
 \ge \frac{|K_n|}{n^2}.
\]
Now consider the subsequence $(n_i)_{i\geq 1}$ defined by $n_1=3$ and $n_{i+1}=n_i+n_i!$ for $i\ge 1$. We will  prove that  the sets $K_{n_{i}}$ are nested and that  $|K_{n_{i}}| = i$ for all $i$.
From the definition of $K_{n}$, it is easy to check that
 \begin{itemize}
 \item  $K_{n_{1}}=\lbrace 1\rbrace$;
  \item $n_{i}\not\in K_{n_{i}}$;
  \item $k \in K_{n_i} \implies k \in K_{n_{i+1}}$, since if $k! \mid n_i - k$ and $k \leq n_i$ then also $k! \mid n_{i+1} - k$;
 \item $n_{i}\in K_{n_{i+1}}$, for the same reason.
  \end{itemize}
  Also note that
   \begin{itemize}
  \item  $k\not\in  K_{n_{i+1}}$ for $k>n_i$ since  $k!$ is too big to divide $n_{i+1}-k$;
 \item
 if  $k<n_i$ and $k\in K_{n_{i+1}}$, then we already have $k \in K_{n_i}$,
 since $k<n_i \implies  k! \mid n_i!$,  which in turn implies $k! \mid n_i -k$.
 \end{itemize}
We therefore have
  $K_{n_{i+1}} = K_{n_i} \cup \{n_i\}$ and $n_i\not\in   K_{n_i}$,
  so inductively
  \[
    K_{n_i} = \{1, n_1, n_2, \dots, n_{i-1}\}.
  \]
 This proves that $|K_{n_i}|=i$ for all $i$.
Since  $|K_{n_i}|\rightarrow \infty$ as $i
 \rightarrow\infty$, it is now clear that  $\limsup\limits_{n\rightarrow\infty} n^{2} \P(\ord \pi = \ord \pi') =\infty.$ 

To finish proving Theorem \ref{thm:lower bound}, we need to find a lower bound for $i$ that is expressed in terms of 
 $n_{i}$.  
 Since $2^{n^2} \ge (n+1)! \ge n!+n$ for any positive integer $n$, we have
\begin{align*}
\lstar(n_{i+1}) = \lstar(n_i!+n_i) &\le \lstar(2^{n_i^2}) \\
&= 1+ \lstar(n_i^2) \\
& \le 1+ \lstar(2^{n_i})  = 2+\lstar(n_i).
\end{align*}
It follows from induction on $i$ that  $\lstar(n_i) \le 2i$ or equivalently, $i \ge \lstar(n_i)/2$. Hence
\[
\frac{|K_{n_i}|}{n_i^2} = \frac{i}{n_i^2} \ge \frac{\lstar(n_i)}{2n_i^2}.\qedhere
\]
\end{proof}

\section{Main Proposition and Proof Sketch}
\label{sec:main-prop}

Throughout let $\pi$ be a random permutation of $[n]$.
Our main result is the following.
\newtheorem*{mainthm}{Theorem 3.1}
\newtheorem*{maincor}{Corollary 3.2}
%\begin{theorem}
\begin{mainthm}
\label{main-lemma}
There is a set $\calM$ with the following properties.
\begin{enumerate}
  \item If $m \notin \calM$ then $\P(\ord \pi = m) = O(n^{-100})$.
  \item $\P(\ord \pi \in \calM) \leq n^{-1+o(1)}$.
\end{enumerate}
\end{mainthm}
%\end{theorem}

Although the proof of Theorem~\ref{main-lemma} is postponed, we can
immediately deduce a non-trivial upper bound for the probability that
two random permutations have the same order.
%\begin{corollary}
\begin{maincor}
\label{corollary}
  $\P(\ord \pi = \ord \pi')  \leq n^{-2+o(1)}.$
 \end{maincor}
%\end{corollary}
\begin{proof}
By considering whether the collision occurs in $\calM$ or $\calM^c$ we have
\begin{align*}
  \P(\ord \pi = \ord \pi')
  &\leq \P(\ord \pi \in \calM)^2 + \sum_{m \notin \calM} \P(\ord \pi = m)^2 \\
  &\leq \P(\ord \pi \in \calM)^2 + \max_{m\notin \calM} \P(\ord \pi = m).
\end{align*}
The first term is bounded by $n^{-2+o(1)}$ and the second term is bounded by $O(n^{-100})$.
\end{proof}

For the proof of Theorem \ref{main-lemma}, we construct a specific  example of such a set $\calM$. For the remainder of this paper, let  $\delta = \delta(n) = 1/\log\log\log n$, and let $\eta = e^{-10/\delta}=\frac{1}{(\log\log n)^{10}}$, though the specific choice is largely irrelevant: all we require is that $\delta$ and $\eta$ decay sufficiently slowly, with $\delta$ decaying much more slowly than $\eta$. Let $\calM$ be the set of all positive integers $m$ having at most $\delta \log n$ distinct prime divisors $p > n^\eta$.

Let us now informally sketch the proof of Theorem~\ref{main-lemma} (some readers may prefer to skip ahead to the next section for  the rigorous proofs). It suffices to consider the case where $\pi$ has $k \geq 2 \delta \log n$ cycles, because all except $n^{-1+o(1)}$ permutations have this property (recall $\delta = o(1)$). These $k$ cycles will be drawn at random from $\{1, \dots, n\}$ according to a harmonic weighting (conditional on their sum being $n$). Using Mertens' third theorem to bound the harmonic weight of the set of $n^\eta$-smooth numbers, we expect at least half of our $2\delta \log n$ cycles to fail to be $n^\eta$-smooth. Therefore we expect $\ord \pi$ to be divisible by some $\delta \log n$ primes $p > n^\eta$, proving part (2) of Theorem~\ref{main-lemma}. The proof of part (1) is easier, and follows from a simple union bound over all the ways that the cycles of $\pi$ might be divisible by the primes dividing $m$.

\section{Proof of Theorem~\ref{main-lemma}, part (2)}

Write $\Z = \Z(\pi)$ for the number of cycles in a random $\pi\in S_n$. It is well known that $\Z -1$ is approximately Poisson with parameter $\log n$. (See, for example, the final section of \cite{MR969872} for tail bounds.) The following lemma's quantitative  formulation is  particularly convenient for us.

\begin{lemma}
Let $n, k \geq 1$, and let $\pi \in S_n$ be random. Then
\[
  \P(\Z(\pi) = k) \leq \frac{1}{n} \frac{h_n^{k-1}}{(k-1)!},
\]
where $h_n = \sum_{j=1}^n 1/j$.
\end{lemma}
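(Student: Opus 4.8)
The plan is to compute the probability generating function of $\Z(\pi)$ exactly and then read off a bound on its coefficients. Recall the classical fact that the number of permutations in $S_n$ with exactly $k$ cycles is the unsigned Stirling number of the first kind, so that
\[
  \sum_{k\ge 1} \P(\Z(\pi)=k)\, x^k \;=\; \frac{x(x+1)\cdots(x+n-1)}{n!} \;=\; \prod_{i=1}^n \frac{x+i-1}{i}.
\]
The first step is to isolate the factor $x$ coming from $i=1$ and rewrite each remaining factor as $\frac{x+i-1}{i} = \frac{i-1}{i}\bigl(1+\tfrac{x}{i-1}\bigr)$; since $\prod_{i=2}^n \frac{i-1}{i} = \frac1n$ telescopes, this gives
\[
  \sum_{k\ge 1} \P(\Z(\pi)=k)\, x^k \;=\; \frac{x}{n}\prod_{j=1}^{n-1}\Bigl(1+\frac{x}{j}\Bigr).
\]

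Next I would extract the coefficient of $x^k$. It equals $\frac1n$ times the elementary symmetric function $e_{k-1}$ evaluated at $1, \tfrac12,\dots,\tfrac1{n-1}$, that is,
\[
  \P(\Z(\pi)=k) \;=\; \frac1n \sum_{1\le j_1<\dots<j_{k-1}\le n-1} \frac{1}{j_1\cdots j_{k-1}}.
\]
Finally, I bound this symmetric sum by the full $(k-1)$-st power of $\sum_{j=1}^{n-1}1/j$: in the multinomial expansion of $\bigl(\sum_{j=1}^{n-1}1/j\bigr)^{k-1}$ every squarefree monomial $1/(j_1\cdots j_{k-1})$ occurs with multiplicity exactly $(k-1)!$, and all terms are nonnegative, so $e_{k-1}(1,\tfrac12,\dots,\tfrac1{n-1}) \le \tfrac{1}{(k-1)!}h_{n-1}^{k-1}$. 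Hence
\[
  \P(\Z(\pi)=k) \;\le\; \frac1n\cdot\frac{h_{n-1}^{k-1}}{(k-1)!} \;\le\; \frac1n\,\frac{h_n^{k-1}}{(k-1)!},
\]
which is the claim; note the formula above also correctly gives $0$ when $k>n$, and equality when $n=1$.

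I do not anticipate a genuine obstacle here: the only point requiring care is the bookkeeping in the telescoping product and the index shift from ``$k$ cycles'' to ``degree $k-1$'' after pulling out the forced factor $x$ — which reflects the fact that every permutation has at least one cycle. An alternative, essentially equivalent route uses the Feller coupling $\Z(\pi)\stackrel{d}{=}\sum_{i=1}^n \xi_i$ with independent $\xi_i\sim\operatorname{Bernoulli}(1/i)$: conditioning on the (almost sure) event $\xi_1=1$ and evaluating $\P\bigl(\sum_{i=2}^n\xi_i = k-1\bigr)$ produces the same symmetric-function expression after the substitution $l=i-1$, using $\prod_{i=2}^n(1-1/i)=1/n$ and $\tfrac{1/i}{1-1/i}=\tfrac{1}{i-1}$.
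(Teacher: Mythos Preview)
Your proof is correct and actually yields the slightly sharper bound with $h_{n-1}$ in place of $h_n$. However, it proceeds along a different line from the paper's argument. The paper starts from Cauchy's cycle-index formula for $p_{n,k}=\P(\Z=k)$, uses the recursion $p_{n,k}=\frac1n\sum_{j=1}^n p_{n-j,k-1}$ (condition on the length of a marked cycle), and then simply drops the constraint $\sum i c_i = n-j$ in the inner sum; the unconstrained sum over all $(c_1,\dots,c_n)$ with $\sum c_i=k-1$ is evaluated by the multinomial theorem to be $h_n^{k-1}/(k-1)!$. By contrast, you work with the rising-factorial generating function for the unsigned Stirling numbers, telescope to obtain the exact formula $\P(\Z=k)=\tfrac1n\,e_{k-1}(1,\tfrac12,\dots,\tfrac1{n-1})$, and then bound the elementary symmetric polynomial by $h_{n-1}^{k-1}/(k-1)!$. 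Your route (or its Feller-coupling variant) gives an explicit closed form before bounding, which is a bit more informative; the paper's route is perhaps more in keeping with the cycle-length computations used elsewhere in the paper and avoids invoking the Stirling-number generating function.
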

\begin{proof}
Write $p_{n,k}$ for $\P(\Z(\pi) = k)$. From Cauchy's formula for the number of permutations in a conjugacy class, we have
\[
  p_{n,k} = \sum \frac{1}{c_1! \cdots c_n! 1^{c_1} \cdots n^{c_n}},
\]
where the sum ranges over all $c_1, \dots, c_n \geq0$ such that $\sum_{i=1}^n c_i = k$ and $\sum_{i=1}^n i c_i = n$. We can ``smooth this out'' by using
\[
  p_{n,k} = \frac{1}{n} \sum_{j=1}^n p_{n-j, k-1}
\]
which follows from conditioning on the length of one of the cycles of $\pi$. Thus we have
\begin{align*}
  p_{n,k}
  &= \frac{1}{n} \sum_{j=1}^n \sum_{\substack{\sum c_i = k-1 \\ \sum i c_i = n-j}} \frac{1}{c_1! \cdots c_n! 1^{c_1} \cdots n^{c_n}} \\
  &\leq \frac{1}{n} \sum_{\sum c_i = k-1} \frac{1}{c_1! \cdots c_n! 1^{c_1} \cdots n^{c_n}} \\
  &= \frac{1}{n} \frac{h_n^{k-1}}{(k-1)!}.
\end{align*}
The last line is an application of the multinomial theorem.
\end{proof}

Using  Stirling's formula, and  monotonicity of the bound  $ \frac{1}{n} \frac{h_n^{k-1}}{(k-1)!}$ as a function of $k$,
we can prove the following corollary.
\begin{corollary} 
\label{tails}
The probability that $\pi$ has $o(\log n)$ cycles is $n^{-1+o(1)}$, and the probability that $\pi$ has more than $10 \log n$ cycles is $O(n^{-14})$.
\end{corollary}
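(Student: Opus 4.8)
\emph{Setup and strategy.} Write $b(k) = \tfrac1n\,\tfrac{h_n^{k-1}}{(k-1)!}$ for the bound furnished by the previous lemma, and recall $h_n = \log n + O(1)$. Since $b(k+1)/b(k) = h_n/k$, the bound $b$ is increasing for $k < h_n$ and decreasing for $k > h_n$, so it is unimodal with its peak near $k \approx \log n$. Both halves of the corollary then follow by summing $b(k)$ over the relevant tail and estimating $b$ at the endpoint of that tail via Stirling's formula. The only step that needs genuine care is the few-cycles bound: because $b$ keeps increasing all the way up to $k \asymp \log n$, it is \emph{not} enough to note that $b$ at $o(\log n)$ is at most $b$ at $\log n$; one must retain the factorial and check, via Stirling, that $(k-1)\log\frac{e h_n}{k-1} = o(\log n)$ throughout the range $k = o(\log n)$. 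The many-cycles bound is a routine geometric tail estimate.

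\emph{Few cycles.} Let $g = g(n) = o(\log n)$ be arbitrary and put $f = \max\{1,\lfloor g\rfloor\}$; for large $n$ we have $f \leq h_n$, so $b$ is increasing on $\{1,\dots,f\}$ and
\[
  \P(\Z(\pi) \leq g) \;\leq\; \sum_{k=1}^{f} b(k) \;\leq\; f\, b(f).
\]
By Stirling, $\log\!\bigl(h_n^{f-1}/(f-1)!\bigr) \leq (f-1)\log\frac{e h_n}{f-1}$, and substituting $f-1 = x\,h_n$ this equals $h_n\bigl(x + x\log\tfrac1x\bigr)$, which is $o(h_n) = o(\log n)$ precisely because $x = (f-1)/h_n \to 0$. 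Hence $b(f) = n^{-1+o(1)}$, and since $f \leq \log n = n^{o(1)}$ we conclude $\P(\Z(\pi)\leq g) \leq f\,b(f) = n^{-1+o(1)}$.

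\emph{Many cycles.} Let $K = \lceil 10\log n\rceil$. For every $k \geq K$ we have $b(k+1)/b(k) = h_n/k \leq h_n/(10\log n) \to \tfrac1{10}$, so for large $n$ this ratio is at most $\tfrac19$; summing the resulting geometric series,
\[
  \P(\Z(\pi) > 10\log n) \;\leq\; \sum_{k\geq K} b(k) \;\leq\; \tfrac{9}{8}\, b(K).
\]
By Stirling again, $b(K) \leq \tfrac1n\bigl(\tfrac{e h_n}{K-1}\bigr)^{K-1}$, and $\tfrac{e h_n}{K-1} \to e/10$ with $\log(e/10) = 1 - \log 10 = -1.302\ldots < -1.3$; thus for large $n$ we get $b(K) \leq \tfrac1n\,e^{-1.3(K-1)} \leq e^{1.3}\,n^{-1}\,n^{-13} = O(n^{-14})$, which proves the claim. (The slack here — $n^{-13.02\ldots}$ rather than $n^{-13}$ inside the sum — is exactly what makes the exponent $-14$ honest.)
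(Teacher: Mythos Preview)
Your proof is correct and follows essentially the same approach as the paper's: both use the unimodality of the bound $b(k)=\tfrac1n\,h_n^{k-1}/(k-1)!$, bound the lower tail by (number of terms)$\times$(endpoint value) and the upper tail by a geometric series, and estimate the endpoint via Stirling. The paper parametrizes the few-cycles case by writing $\xi=h_n/\omega$ with $\omega\to\infty$ and bounding $\tfrac1n(e\omega)^{\xi}=n^{-1+o(1)}$, which is algebraically equivalent to your $h_n(x+x\log\tfrac1x)=o(\log n)$ computation.
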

\begin{proof}
Let $\xi =\frac{h_{n}}{\omega}$, where ${\omega}=\omega(n)\rightarrow \infty.$
By calculating the ratios of successive terms, one can verify that the bound $\frac{1}{n} \frac{h_n^{k-1}}{(k-1)!}$ is increasing as a function of $k$ when $k\leq \xi +1$. Thus
 \[ \P(\Z \leq \xi+1)\leq (\xi+1)\frac{1}{n}\frac{h_{n}^{\xi}}{(\xi+1)!}\leq  \frac{1}{n}(e \omega)^{\xi}=n^{-1+o(1)}.\]
 Similarly, when $k>10h_{n}$, the bound is decreasing as a function of $k$.  In this range, a 
 crude version of Stirling's
 formula yields  
 \[ 
 \frac{h_n^{k-1}}{k!}\leq \left(\frac{eh_{n}}{k}\right)^{k} \leq \left(\frac{e}{10}\right)^k.
 \]
 Therefore
\[ 
\P(\Z \geq 10\log n) \leq  \frac{1}{n} \sum\limits_{k\geq  10h_{n}} \left(\frac{e}{10}\right)^{k}=O\left(n^{10\log(e/10)-1}\right).		\qedhere
\] 
 \end{proof}
We use only Corollary \ref{tails} in the proof,  but a similar argument establishes that, for fixed positive $\eps$, 
 the probability that $\pi$ has more than $(1+\eps) \log n$ cycles is bounded by $n^{-f(\eps) + o(1)}$, where
$ f(\eps) = (1 + \eps) \log ( 1+ \eps) - \eps.$
%%%%%%%%%%%%%%%%%%%%%%%

\begin{lemma}\label{A-lemma}
	Let $A_1, \dots, A_\Z$ be the cycle lengths of $\pi$ in a random order. Then for any $k\geq 0$ and any $k$-tuple $(a_1, \dots, a_k)$ of positive integers such that $a_1 + \cdots + a_k = n$ we have
	\[
	  \P(\Z = k, A_1=a_1, \dots, A_k=a_k) = \frac{1}{k!} \frac{1}{a_1 \dots a_k}.
	\]
\end{lemma}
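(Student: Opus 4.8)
The plan is to use Cauchy's formula directly, the same way the previous lemma did. Recall that a permutation $\pi \in S_n$ with cycle type $(c_1, \dots, c_n)$ (meaning $c_j$ cycles of length $j$) satisfies Cauchy's formula: the number of such permutations is $n! / \prod_j (c_j! \, j^{c_j})$, so the probability that a uniformly random $\pi$ has this exact cycle type is $1 / \prod_j (c_j! \, j^{c_j})$. I want to transfer this to a statement about the ordered list $(A_1, \dots, A_\Z)$ obtained by listing the cycle lengths of $\pi$ in a uniformly random order.

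The key step is a counting bijection between (unordered cycle structures with a chosen random ordering) and (ordered tuples $(a_1, \dots, a_k)$). Fix a $k$-tuple $(a_1, \dots, a_k)$ of positive integers summing to $n$, and let $(c_1, \dots, c_n)$ be its cycle-type vector, i.e. $c_j = \#\{i : a_i = j\}$. Conditioning on the (unordered) cycle type of $\pi$ being this vector, the random ordering $A_1, \dots, A_\Z$ is a uniformly random permutation of the multiset $\{a_1, \dots, a_k\}$. The number of distinct orderings of this multiset is the multinomial coefficient $k! / \prod_j c_j!$, and the specific ordering $(a_1, \dots, a_k)$ is one of them, so
\[
  \P(A_1 = a_1, \dots, A_k = a_k \mid \type \pi = (c_1, \dots, c_n)) = \frac{\prod_j c_j!}{k!}.
\]
Multiplying by $\P(\type \pi = (c_1, \dots, c_n)) = 1/\prod_j (c_j! \, j^{c_j})$ from Cauchy's formula, the $\prod_j c_j!$ factors cancel, leaving $\frac{1}{k!} \cdot \frac{1}{\prod_j j^{c_j}}$. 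Finally, $\prod_j j^{c_j} = \prod_{i=1}^k a_i$ by regrouping the product over values into a product over the indices $i$, which gives exactly $\frac{1}{k!}\frac{1}{a_1 \cdots a_k}$. The event $\{\Z = k, A_1 = a_1, \dots, A_k = a_k\}$ coincides with $\{\type\pi = (c_1,\dots,c_n), A_1 = a_1, \dots, A_k = a_k\}$ since specifying the first $k$ entries of the ordering together with $\Z = k$ forces the cycle type, so we are done.

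I do not expect a genuine obstacle here; the only thing requiring a little care is bookkeeping the multiplicities — making sure the multinomial coefficient for orderings of a multiset with repeated parts is handled correctly, and confirming the cancellation of the $\prod_j c_j!$ terms. One could alternatively avoid conditioning language entirely and argue by a direct double count: pair each permutation $\sigma \in S_n$ with cycle type $(c_1, \dots, c_n)$ together with each of the $k!$ orderings of its cycles, note that exactly $\prod_j c_j!$ of these (permutation, ordering) pairs yield the target tuple $(a_1, \dots, a_k)$ for each such $\sigma$, and divide by $n! \cdot k!$. Either route is short; the multiset bookkeeping is the only place to be attentive.
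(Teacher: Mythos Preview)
Your proof is correct and follows essentially the same route as the paper's: compute the probability of the underlying cycle type via Cauchy's formula, multiply by the probability that a uniformly random ordering of the multiset of cycle lengths produces the specific tuple $(a_1,\dots,a_k)$ (the reciprocal of the multinomial coefficient), and observe that the multiplicity factorials cancel. The only cosmetic difference is that the paper records the multiplicities as $m_1,\dots,m_s$ rather than as the vector $(c_1,\dots,c_n)$.
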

\begin{proof}
	Let the multiplicities among $a_1, \dots, a_k$ be $m_1, \dots, m_s$ (so that $\sum_i m_i = k$). Then by Cauchy's formula the probability that this cycle type arises is
	\[
		\frac{1}{m_1! \cdots m_s! a_1 \cdots a_k}.
	\]
	When these cycles are ordered randomly, the probability that we get $a_1, \dots, a_k$ in order is
	\[
	  \binom{k}{m_1\,\cdots\,m_k}^{-1}.
	\]
	The result follows from multiplying the previous two displays.
\end{proof}

The combined message of the previous two lemmas is that we may assume $\pi$ has between $\delta \log n$ and $10 \log n$ cycles (for any slowly decaying $\delta$), while, conditional on $k$, these cycles are distributed roughly independently according to a harmonic weighting.

For any set $S$ of integers,  let us call $h_S = \sum_{j \in S} 1/j$ the \emph{harmonic weight} of $S$.
If $P$ is any set of prime numbers,  a  positive integer  $n$ is $P$-smooth iff all prime divisors of
$n$ are elements of $P$.

\begin{lemma}\label{harmonic-weight}
	Let $N\geq 1$. Let $P$ be the set of all primes $p \leq N$, as well as some $o(N)$ further primes. Then the harmonic weight of the set of $P$-smooth numbers is
	\[
	  (1 + o(1)) e^\gamma \log N,
	\]
{where $\gamma$ is the Euler--Mascheroni constant.}
\end{lemma}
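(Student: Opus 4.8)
The plan is to factor the harmonic weight of the $P$-smooth numbers as an Euler product and estimate it using Mertens' third theorem, with the $o(N)$ extra primes contributing only a $1+o(1)$ factor. Write $\Pi_P(x) = \sum_{\text{$n$ $P$-smooth}} x^{\Omega(n)}$-style generating functions only as motivation; the clean statement is that, since $P$-smooth numbers are exactly the numbers built multiplicatively from primes in $P$, and $\sum 1/n$ over such $n$ converges (because $P$ omits all primes $>N$ except finitely many, so the tail is summable), we have the absolutely convergent Euler product
\[
  h_{\{n : n \text{ is } P\text{-smooth}\}} = \prod_{p \in P} \left(1 - \frac1p\right)^{-1}.
\]
So the whole problem reduces to estimating $\prod_{p \in P}(1-1/p)^{-1}$.

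First I would split $P$ into $P_0 = \{p \le N\}$ and $P_1$, the set of $o(N)$ extra primes, all of which are $> N$. For $P_0$, Mertens' third theorem gives $\prod_{p \le N}(1-1/p)^{-1} = (1+o(1)) e^\gamma \log N$ directly. For $P_1$, I need to show $\prod_{p \in P_1}(1-1/p)^{-1} = 1 + o(1)$. Taking logarithms, $\sum_{p \in P_1} -\log(1-1/p) \le \sum_{p \in P_1} \frac{1}{p-1} \le \frac{2}{N}\,|P_1|$ for $N \ge 2$ (since every $p \in P_1$ exceeds $N$, so $p - 1 \ge N/2$, say, once $N$ is large). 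Because $|P_1| = o(N)$, this sum is $o(1)$, hence the $P_1$-product is $e^{o(1)} = 1 + o(1)$. Multiplying the two estimates gives the claimed $(1+o(1)) e^\gamma \log N$.

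The one genuine point requiring care — and the step I'd flag as the main obstacle — is justifying the Euler product identity, i.e.\ that the series $\sum 1/n$ over $P$-smooth $n$ converges and equals $\prod_{p\in P}(1-1/p)^{-1}$. Since $P$ contains all primes up to $N$, the $P$-smooth numbers include all of $\{1,\dots,N\}$ and more, so convergence is not automatic from $P$ being "small"; it follows instead from the fact that $P$ misses all but finitely many primes. Concretely, let $q_1 < q_2 < \cdots$ enumerate the primes \emph{not} in $P$ (a cofinite set of primes, all but the finitely many in $P_1 \cup \{p \le N\}$ wait — rather: the complement of $P$ among all primes is cofinite). Every $P$-smooth $n$ is coprime to all $q_i$; comparing with the full product over all primes, $\sum_{n \text{ $P$-smooth}} 1/n \le \prod_{p \in P}(1-1/p)^{-1}$ by expanding the (finite partial) products and taking a limit, while the reverse inequality is the standard expansion of the product into a sum over integers supported on $P$. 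Absolute convergence of the partial products $\prod_{p \in P, p \le x}(1-1/p)^{-1}$ as $x \to \infty$ is clear since $\sum_{p \in P} 1/p \le \sum_{p \le N} 1/p + |P_1|/N < \infty$. This makes the rearrangement legitimate and pins down the identity; the rest is the Mertens estimate above.
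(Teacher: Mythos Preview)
Your proof is correct and follows essentially the same route as the paper: express the harmonic weight as the Euler product $\prod_{p\in P}(1-1/p)^{-1}$, apply Mertens' third theorem to the primes $p\le N$, and bound the contribution of the $o(N)$ extra primes by $e^{o(1)}$ via $\sum_{p\in P_1}1/p \le |P_1|/N = o(1)$. Your additional paragraph on convergence is more careful than the paper (which simply asserts the identity), though since $P$ is in fact a finite set the Euler product is a finite product and the identity with the harmonic weight is elementary.
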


\begin{proof}
	The harmonic weight of the set of $P$-smooth numbers is
	\[
	  \prod_{p \in P} (1-1/p)^{-1}.
	\]
	By Mertens' third theorem we have
	\[
	  \prod_{p \leq N} (1-1/p)^{-1} \sim e^\gamma \log N.
	\]
	On the other hand we have
	\[
	  \prod_{p \in P, p > N} (1-1/p)^{-1}
	  = \exp \sum_{p \in P, p > N} O(1/p) = e^{o(1)}.\qedhere
	\]
\end{proof}

 Recall that  $\delta  = 1/\log\log\log n$, and $\eta = e^{-10/\delta}=\frac{1}{(\log\log n)^{10}}$. With this choice of
 $\delta$ and $\eta$ we have the following proposition.
\begin{proposition}
	Let $\pi$ be drawn from $S_n$ uniformly at random. Then apart from an event of probability $n^{-1+o(1)}$, $\pi$ has at least $\delta \log n$ cycles and $\ord \pi$ is divisible by at least $\delta \log n$ primes $p > n^\eta$. 
\end{proposition}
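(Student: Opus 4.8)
The plan is to make the heuristic of the previous section rigorous, while also controlling the overlaps among the large prime factors of different cycles. By Corollary~\ref{tails}, the probability that $\Z(\pi)\notin[10\delta\log n,\,10\log n]$ is $n^{-1+o(1)}$ (I take the lower threshold a little above the sketch's $2\delta\log n$ to leave room for overlaps), so after conditioning on $\Z=k$ in this range it suffices to show that $\ord\pi$ has at least $\delta\log n$ prime divisors $>n^\eta$ apart from an event of probability $n^{-\Omega(1)}$.

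\emph{Step 1: most cycles are not $n^\eta$-smooth.} Let $B$ count the cycles of $\pi$ whose length is not $n^\eta$-smooth, and let $h_{\mathrm{sm}}$ be the harmonic weight of the set of $n^\eta$-smooth integers, so that $h_{\mathrm{sm}}=(1+o(1))e^\gamma\eta\log n$ by Lemma~\ref{harmonic-weight} (applied with $N=n^\eta$). By Lemma~\ref{A-lemma}, conditional on $\Z=k$ the lengths $A_1,\dots,A_k$ in random order have joint law proportional to $1/(a_1\cdots a_k)$ over compositions of $n$ into $k$ parts; so for $m\le k$ the conditional probability that $A_1,\dots,A_m$ are all $n^\eta$-smooth is $S'/S$, where $S=\sum_{a_1+\cdots+a_k=n}1/(a_1\cdots a_k)$ and $S'$ restricts this sum to $n^\eta$-smooth $a_1,\dots,a_m$. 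Summing out $a_{m+1},\dots,a_k$ in $S'$ via $\sum_{b_1+\cdots+b_\ell=N}1/(b_1\cdots b_\ell)\le h_N^{\ell-1}$ for $\ell\le N$ (immediate from Lemma~\ref{A-lemma} and the bound $\P(\Z=k)\le\frac1n h_n^{k-1}/(k-1)!$ proved above) leaves the factor $h_{\mathrm{sm}}^m$, so $S'\le h_n^{k-m-1}h_{\mathrm{sm}}^m$; bounding $S$ below crudely by $n^{-O(1)}h_n^{k-1}$ (restrict to $a_1,\dots,a_{k-1}\le\sqrt n$ with $a_k$ the remainder) gives $S'/S\le n^{O(1)}(h_{\mathrm{sm}}/h_n)^m=n^{O(1)}\big((1+o(1))e^\gamma\eta\big)^m$. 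Taking $m=\lceil k/2\rceil$, summing over the $\le 2^k$ choices of cycles and using $\eta=e^{-10/\delta}$ together with $k\ge 10\delta\log n$ (so $(e^\gamma\eta)^m\le n^{-45+o(1)}$, overwhelming the polynomial factor $n^{O(1)}2^k$), we conclude $\P(B<\Z/2)\le n^{-\Omega(1)}$.

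\emph{Step 2: overlaps.} Let $t$ be the number of primes $p>n^\eta$ dividing $\ord\pi$ and $d_p$ the number of cycles of $\pi$ whose length is divisible by $p$. Since every prime $p>n^\eta$ dividing $\ord\pi=\lcm(A_1,\dots,A_k)$ divides some non-$n^\eta$-smooth cycle, $t=\sum_{p>n^\eta}d_p-\sum_{p>n^\eta}(d_p-1)^+\ge B-\sum_{p>n^\eta}(d_p-1)^+$. Moreover $\sum_{p>n^\eta}(d_p-1)^+\le\sum_{p>n^\eta}\binom{d_p}{2}\le\eta^{-1}Y$, where $Y$ is the number of unordered pairs of distinct cycles of $\pi$ sharing a prime factor $>n^\eta$ (any common divisor is $\le n$, hence has at most $1/\eta$ prime factors $>n^\eta$). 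Hence on the event $\{\Z\ge 10\delta\log n,\ B\ge\Z/2,\ Y<\eta\delta\log n\}$ we have $t\ge\Z/2-\delta\log n\ge\delta\log n$, and it remains only to prove $\P(Y\ge\eta\delta\log n)=n^{-1+o(1)}$.

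\emph{The main obstacle.} The first-moment estimate $\E Y\le\sum_{p>n^\eta}(h_{\lfloor n/p\rfloor}/p)^2\le 2(\log n)^2 n^{-\eta}$ is easy, but Markov's inequality only yields $\P(Y\ge\eta\delta\log n)\le n^{-\eta+o(1)}$, which is far larger than $n^{-1}$ since $\eta\to0$; a sufficiently strong tail bound for $Y$ is the heart of the argument. I expect to obtain it by a higher-moment computation: a cycle divisible by some $p>n^\eta$ has length $\ge n^\eta$, so the pairs counted by $Y$ are individually very rare and only weakly dependent, and $\E[Y^r]$ should be bounded by $B_{2r}\,(2(\log n)^2 n^{-\eta})^r$ (the Bell number $B_{2r}$ accounting for how the at most $2r$ cycles involved may coincide); taking $r=\lfloor\eta\delta\log n\rfloor$ and applying Markov to $Y^r$ then gives $\P(Y\ge\eta\delta\log n)\le n^{-\omega(1)}$, because $\eta^2\delta(\log n)^2\gg\log n$. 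Combining the three parts, the exceptional probability is $n^{-1+o(1)}+n^{-\Omega(1)}+n^{-\omega(1)}=n^{-1+o(1)}$, while the assertion that $\pi$ has at least $\delta\log n$ cycles holds on $\{\Z\ge 10\delta\log n\}$.
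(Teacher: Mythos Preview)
Your Step~1 is essentially sound and parallels the paper's main estimate, but the proof is incomplete at the point you yourself flag as ``the main obstacle'': the tail bound for $Y$. Worse, the specific moment bound you propose, $\E[Y^r]\le B_{2r}\bigl(2(\log n)^2 n^{-\eta}\bigr)^r$, is false. Since $Y$ is a nonnegative integer, $Y^r\ge Y$, so $\E[Y^r]\ge\E[Y]$, which is of order $(\log n)^2 n^{-\eta}$; but with $r=\lfloor\eta\delta\log n\rfloor$ one checks that $B_{2r}\bigl(2(\log n)^2 n^{-\eta}\bigr)^r$ is much smaller than $(\log n)^2 n^{-\eta}$, because the factor $n^{-\eta(r-1)}=\exp\bigl(-(1+o(1))\eta^2\delta(\log n)^2\bigr)$ overwhelms $B_{2r}\le(2r)^{2r}=\exp\bigl(O(\eta\delta\log n\cdot\log\log n)\bigr)$. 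So the moment method as stated cannot close the gap; at a minimum one would need factorial moments of a more carefully chosen statistic, and the combinatorics of overlapping pairs (e.g.\ many cycles sharing a single large prime) is genuinely delicate.

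The paper sidesteps the entire overlap issue with one clean device: rather than first producing many non-$n^\eta$-smooth cycles and then separately arguing that their large prime factors are distinct, it \emph{builds distinctness into the definition}. Order the cycles as $A_1,\dots,A_k$ and set $P_0=\{p\le n^\eta\}$; for $i\ge1$ let $P_i=P_{i-1}\cup\{p_i\}$ where $p_i\notin P_{i-1}$ is a prime dividing $A_i$, if one exists, and $P_i=P_{i-1}$ otherwise. Then the number of distinct primes $>n^\eta$ dividing $\ord\pi$ is at least $|P_k\setminus P_0|=k-|I|$, where $I=\{i:A_i\text{ is }P_{i-1}\text{-smooth}\}$. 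Bounding $\P(|I|>k/2)$ is exactly your Step~1 computation with ``$n^\eta$-smooth'' replaced by ``$P_{i-1}$-smooth''; since each $P_{i-1}$ contains at most $k=O(\log n)=o(n^\eta)$ primes beyond $P_0$, Lemma~\ref{harmonic-weight} still gives harmonic weight $\le 2\eta h_n$, and the paper bounds the unconditional probability $\P(\Z=k,\,|I|>k/2)$ directly (so your lower bound on $S$ is not needed either). No second step is required.
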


\begin{proof}
	Let $A_1, \dots, A_\Z$ be the cycle lengths of $\pi$ in a random order. By Lemma~\ref{A-lemma}, provided that $a_1 + \cdots + a_k = n$ we have
	\[
	  \P(\Z = k, A_1=a_1, \dots, A_k=a_k) = \frac{1}{k!} \frac{1}{a_1 \cdots a_k}.
	\]
	
	Define sets of primes $P_i$ as follows:
	\begin{enumerate}
		\item Let $P_0$ be the set of all primes $p \leq n^\eta$.
		\item For $0 < i \leq k$, if $A_i$ is $P_{i-1}$-smooth, put $P_i = P_{i-1}$. Otherwise pick a prime $p_i \notin P_{i-1}$ dividing $A_i$ (the smallest such, say), and let $P_i = P_{i-1} \cup \{p_i\}$.
	\end{enumerate}
	Each set $P_i$ contains at most $k$ primes $p > n^\eta$, so as long as $k = o(n^\eta)$ Lemma~\ref{harmonic-weight} implies that the set of $P_i$-smooth numbers has harmonic weight at most $2 \eta h_n$. 
	
	Let $I$ be the set of indices $i\in\{1, \dots, k\}$ such that $A_i$ is $P_{i-1}$-smooth (and hence $P_i=P_{i-1}$). Assuming $k \geq 2\delta \log n$, if $|I| \leq k/2$ then we find that $P_k$ contains at least $\delta \log n$ distinct primes $p > n^\eta$, as desired. We will bound the probability that $|I| > k/2$.
	
	Let $E_k$ be the event that $\pi$ has $k$ cycles and $|I| > k/2$. Then, assuming $2\delta \log n \leq k \leq 10 \log n$,
	\begin{align*}
		\P(E_k)
		&= \sum_{I_0: |I_0| > k/2} \P(\text{$\Z(\pi)=k$ and $I=I_0$}) \\
		&= \sum_{I_0: |I_0| > k/2} \sum_{\substack{a_1, \dots, a_k \geq 1 \\ a_1 + \cdots + a_k = n}} \frac{1}{k!} \frac{1_\text{$a_i$ is $P_{i-1}$-smooth for each $i \in I_0$}}{a_1 \cdots a_k} \\
		&\leq \sum_{I_0: |I_0| > k/2} \frac{1}{k!} h_n^{k - |I_0|} (2 \eta h_n)^{|I_0|} \\
		& \leq \frac{h_n^k}{k!} 2^k (2\eta)^{k/2} \\
		& \leq \frac{h_n^k}{k!} (8\eta)^{\delta \log n} \\
		& \leq \frac{h_n^k}{k!} n^{-10 + o(1)}.
	\end{align*}
	 Hence
	\[
	  \P\left( \bigcup_{2\delta \log n \leq k \leq 10 \log n} E_k\right) \leq   e^{h_{n}} n^{-10 + o(1)}=    n^{-9+o(1)}.
	\]
	On the other hand, by Corollary~\ref{tails} the probability that $\pi$ has either fewer than $2\delta \log n$ cycles or more than $10 \log n$ cycles is bounded by $n^{-1+o(1)}$.
	 This proves the lemma.
\end{proof}

This finishes the proof of part (2) of Theorem~\ref{main-lemma}.

\section{Proof of Theorem~\ref{main-lemma}, part (1)}

Recall that  $\delta  = 1/\log\log\log n$, and $\eta = e^{-10/\delta}=\frac{1}{(\log\log n)^{10}}$.

\begin{lemma}
	Let $m$ be an integer having at least $\delta \log n$ prime divisors $p > n^\eta$. Then
	\[
		\P(\ord \pi = m) \leq e^{-c \delta \eta \log^2 n}.
	\]
\end{lemma}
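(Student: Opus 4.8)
The plan is a union bound over the ways the cycles of $\pi$ can account for the large primes dividing $m$. Since $m$ has at least $\delta\log n$ prime divisors exceeding $n^\eta$, fix such primes $q_1, \dots, q_r$ with $r = \lceil \delta\log n\rceil$ (so $\delta\log n \le r \le 2\delta\log n$ for $n$ large). Write $A_1, \dots, A_\Z$ for the cycle lengths of $\pi$ in a random order, as in Lemma~\ref{A-lemma}. If $\ord\pi = m$ then $q_1\cdots q_r \mid \ord\pi$, and since $\ord\pi$ is the lcm of the $A_i$, for each $j$ there is some index with $q_j \mid A_i$. Thus
\[
  \{\ord\pi = m\} \subseteq \bigcup_{\phi} \{\, q_j \mid A_{\phi(j)} \text{ for all } j \in \{1,\dots,r\} \,\},
\]
the union running over all functions $\phi\colon \{1,\dots,r\} \to \{1,\dots,\Z\}$.

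Fix $\phi$ and a value $\Z = k$. The event ``$q_j \mid A_{\phi(j)}$ for all $j$'' forces $D_i := \prod_{j \in \phi^{-1}(i)} q_j$ to divide $A_i$ for every $i$, using that the $q_j$ are distinct primes so that their lcm is their product. By Lemma~\ref{A-lemma}, summing $\frac{1}{k!}\frac{1}{a_1\cdots a_k}$ over compositions $a_1+\cdots+a_k = n$ with $D_i \mid a_i$ and then relaxing the constraint $\sum a_i = n$ to $a_i \le n$, this probability is at most
\[
  \frac{1}{k!}\prod_{i=1}^{k} \sum_{\substack{1 \le a \le n \\ D_i \mid a}} \frac1a
  \;\le\; \frac{1}{k!}\prod_{i=1}^{k} \frac{h_n}{D_i}
  \;=\; \frac{h_n^k}{k!}\cdot\frac{1}{q_1\cdots q_r},
\]
since $\prod_i D_i = q_1\cdots q_r$ (every $q_j$ is charged to exactly one $i$). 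There are $k^r$ choices of $\phi$, and $q_1\cdots q_r > n^{\eta r}$, so $\P(\Z = k,\ \ord\pi = m) \le \frac{h_n^k}{k!}\frac{k^r}{n^{\eta r}}$. Summing over $k \ge 1$ gives
\[
  \P(\ord\pi = m) \;\le\; \frac{1}{n^{\eta r}}\sum_{k\ge1}\frac{h_n^k\,k^r}{k!}.
\]

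It remains to estimate the sum, which is elementary: splitting at $k = 10\log n$ — below which $k^r \le (10\log n)^r$ so the partial sum is at most $(10\log n)^r e^{h_n}$, and above which the terms $h_n^k k^r/k!$ decay geometrically because $r = o(\log n)$ — one gets $\sum_{k\ge1} h_n^k k^r/k! \le n^{O(1)}(10\log n)^r$ for $n$ large. Hence
\[
  \P(\ord\pi = m) \;\le\; n^{O(1)}\exp\!\bigl(r\log(10\log n) - \eta r\log n\bigr).
\]
Since $\eta\log n = \log n/(\log\log n)^{10}$ dominates $\log(10\log n) \sim \log\log n$, the exponent is at most $-\tfrac12\eta r\log n \le -\tfrac12\delta\eta\log^2 n$; and because $\delta\eta\log n \to \infty$, the polynomial prefactor $n^{O(1)}$ is absorbed, yielding $\P(\ord\pi = m) \le e^{-c\,\delta\eta\log^2 n}$ for a suitable absolute constant $c>0$ and all large $n$, which is all that is needed. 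The one point requiring care is arranging the union bound so that each $q_j$ contributes to exactly one factor $D_i$: this is what collapses $\prod_i D_i$ to $q_1\cdots q_r$ and produces the decisive factor $n^{-\eta r}$. Everything else is routine.
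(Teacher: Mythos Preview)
Your proof is correct and follows essentially the same union-bound strategy as the paper: assign each large prime $q_j$ dividing $m$ to a cycle index, bound the probability of any fixed assignment by $\prod_j 1/q_j \le n^{-\eta r}$, and multiply by the number of assignments. The only technical difference is that the paper samples cycle lengths via the stick-breaking process (pick $a_1$ uniformly in $\{1,\dots,n\}$, then $a_2$ in $\{1,\dots,n-a_1\}$, etc.), which gives the divisibility bound $\P(p\mid a_i)\le 1/p$ directly and lets one truncate at $(\log n)^3$ cycles, whereas you use the exact joint density from Lemma~\ref{A-lemma} and sum $\sum_k h_n^k k^r/k!$; both routes lead to the same final estimate.
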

\begin{proof}
	Recall that the cycle lengths of a random permutation can be sampled using the following process. Start by picking $a_1$ uniformly from $\{1, \dots, n\}$. If $a_1 < n$, pick $a_2$ uniformly from $\{1, \dots, n - a_1\}$, etc. The process continues until $a_1 + \cdots + a_k = n$.
	
	Fix a set $P$ of $\lceil \delta \log n \rceil$ prime divisors $p > n^\eta$ of $m$. Now sample $\pi \in S_n$ using the process just described. For each fixed $i$ and $p$, the probability that $a_i$ is divisible by $p$ is at most $1/p$, independently of the previous steps in the process. In fact, for any set of primes $p_1, \dots, p_t \in P$, the probability that $a_i$ is divisible by each of $p_1, \dots, p_t$ is at most $1/(p_1 \cdots p_t)$. On the other hand, in order that $\ord \pi = m$, for each $p \in P$ there must be an index $i$ such that $a_i$ is divisible by $P$.
	
	The event that $\pi$ has more than $(\log n)^3$ cycles is negligible (it has probability $o(e^{-c (\log n)^3})$). On the other hand, the probability that $\pi$ has at most $(\log n)^3$ cycles and that for each $p \in P$ there is some $i$ such that $a_i$ is divisible by $p$ is bounded by
	\begin{align*}
	  ((\log n)^3)^{|P|} \cdot \prod_{p \in P} 1/p
	  &\leq (\log n)^{O(\log n)} (n^{-\eta})^{\delta \log n} \\
	  &\leq e^{-c \delta \eta \log^2 n}. \qedhere
	\end{align*}
\end{proof}

This finishes the proof of Theorem~\ref{main-lemma}.

\section{Conclusion}
While we have established that $\P(\ord \pi = \ord \pi')$ is generically larger than $O(1/n^2)$ but no larger than $n^{-2+o(1)}$,
its exact order of magnitude remains mysterious and appears to be linked with arithmetical properties of $n$, as in the proof of Theorem~\ref{thm:lower bound}.
Establishing more precise estimates should be of interest to anyone who considers
themself to be a problem solver (in the sense of
Gowers's essay \cite{MR1754768}), just because it is an easily stated problem that is not readily solved.
We list here a few related observations and open questions.
\begin{enumerate}

 \item What is the $\liminf$ of $n^2 \P(\ord \pi = \ord \pi')$ as $n$ tends to infinity? The integers $n$ constructed by Theorem~\ref{thm:lower bound} have a particular arithmetic form. What is the behaviour for $n$ of the form $k!+1$?

 \item What is $\max_{m}\P(\ord\pi =m)$, and for what value(s) of $m$ is it attained?
 	Theorem~\ref{main-lemma} gives an upper bound of $n^{-1+o(1)}$ for this probability.
  Clearly the max is at least $1/n$, since $\pi$ is an $n$-cycle with probability $1/n$.
  In fact the max is at least $1/(n-1)$, for the same reason but with $(n-1)$-cycles.
  The answer may be close to this,
  but we saw in the proof of Theorem~\ref{thm:lower bound} that
  \[
    \P(\ord \pi = n-k) \ge 1/(n-k)
  \]
  for any $k \in K_n$, so the maximum can be larger.
  This problem was mentioned by Erd\H{o}s and Tur\'an in 
  \cite{MR232833}.% \cite{ET4}.
  \item Let $\pi_n$ be a random element of $S_n$.
 The quantity $\P(\ord \pi_n = m)$ as a function of $m$ and $n$ can be very sensitive to the value of $n$. For example, if $n$ is prime then $\P(\ord \pi_n = n) = 1/n$ but $\P(\ord \pi_{n-1} = n) = 0$.
         \item As a generalization of Godin's problem, one might consider symmetric groups of different sizes.
Consider random permutations $(\pi_1,\pi_2)\in S_{n_{1}}\times S_{n_{2}}$, and estimate the probability they have the same order. An upper bound is immediate from Corollary~\ref{corollary} and the Cauchy--Schwarz inequality:
\begin{align*}
  \P(\ord \pi_1 = \ord \pi_2)
  &= \sum_m \P(\ord \pi_1 = m) \P(\ord \pi_2 = m) \\
  &\leq \P(\ord \pi_1 = \ord\pi'_1)^{1/2} \P(\ord \pi_2 = \ord\pi'_2)^{1/2} \\
  &\leq n_1^{-1+o(1)} n_2^{-1+o(1)}.
\end{align*}
 \end{enumerate}

\bibliographystyle{alpha}
\bibliography{ABEST-arxiv-v3}
%\bibliography{twopermsrefs}

\end{document}